\documentclass[12pt]{amsart}
\usepackage{amsmath,amsthm,amsfonts,amssymb}
\usepackage{amsmath, amsthm, amssymb, amscd, amsfonts}
\usepackage[all]{xy}
\usepackage{amssymb, amsthm, amsmath}
\usepackage[english]{babel}
\usepackage{amsfonts}
\usepackage{color}
\usepackage{leftidx}
\usepackage{enumerate}

\usepackage[dvipsnames]{xcolor}

\theoremstyle{plain}

\newtheorem{theorem}{Theorem}[section]
\newtheorem{corollary}[theorem]{Corollary}
\newtheorem{conjecture}[theorem]{Conjecture}
\newtheorem{proposition}[theorem]{Proposition}
\newtheorem{lemma}[theorem]{Lemma}
\newtheorem{question}[theorem]{Question}

\theoremstyle{definition}

\newtheorem{example}[theorem]{Example}

\newtheorem{rem}[theorem]{Remark}
\theoremstyle{remark}
\newtheorem{remark}[theorem]{Remark}

\DeclareMathOperator{\id}{id}
\DeclareMathOperator{\Hom}{Hom}

\newcommand{\ra}{\rightarrow}

\newcommand{\ot}{\otimes}
\newcommand{\mtc}{\mathcal}

\newcommand{\lb}{\label}
\newcommand{\Lam}{\Lambda}

\newcommand{\eps}{\fp}
\newcommand{\bn}{\begin}

\newcommand{\C}{\mtc{C}}\newcommand{\E}{\mtc{E}}\newcommand{\B}{\mtc{B}}
\newcommand{\Rp}{\mtr{Rep}}
\newcommand{\A}{\mtc{A}}
\newcommand{\SC}{\Lambda_{\C}}
\newcommand{\SD}{\Lambda_{\mtc{D}}}

\newcommand{\D}{\mtc{D}}
\newcommand{\M}{\mtc{M}}

\numberwithin{equation}{section}

\newcommand{\Rg}{{\mathrm R}}

\newcommand{\fp}{\mtr{FPdim}}
\newcommand{\bp}{\bn{proposition}}
\newcommand{\ep}{\end{proposition}}
\newcommand{\gr}{K_0}
\newcommand{\rD}{\Rg_{ _{\D}}}
\newcommand{\rC}{\Rg_{ _{\C}}}
\newcommand{\rE}{\Rg_{ _{\E}}}

\newcommand{\mtr}{\mathrm}

\newcommand{\cc}{\mathcal{C}}

\newcommand{\dd}{\mathcal{D}}

\newcommand{\zz}{\mathcal{Z}}

\newcommand{\comod}{\mathrm{comod}}

\newcommand{\iso}{\stackrel{\sim}{\longrightarrow}}

\newcommand{\bra}[1]{\langle #1 \rangle}

\newcommand{\kk}{\Bbbk}

\newcommand{\ti}{\mbox{-}\,}

\newcommand{\un}{\mathbf{1}}

\newcommand{\Aut}{\mathrm{Aut}}
\newcommand{\ncm}{\newcommand}
\ncm{\bpf}{\bn{proof}}
\ncm{\epf}{\end{proof}}
\numberwithin{equation}{section}
\ncm{\md}{\medbreak}
\ncm{\cx}{\mtc{X}}
\ncm{\core}{\mtr{core}}
\ncm{\cs}{\mtc{S}}

                  \ncm{\cd}{\mtc{D}}
\ncm{\onh}{On the other hand\;}
\ncm{\cm}{\mtc{M}}
\ncm{\cz}{\mtc{Z}}
\ncm{\inv}{^{-1}}
\ncm{\ene}{\end{enumerate}}
\ncm{\os}{\oplus}
\ncm{\Irr}{\mathrm{Irr}}
\ncm{\hsa}{Hopf subalgebra of }
\ncm{\ses}{semisimple}
\ncm{\x}{$}
\ncm{\mi}{\mtr{I}}
\ncm{\cZ}{\mtc{Z}}\ncm{\xra}{\xrightarrow}
\ncm{\cb}{\mtc{B}}\ncm{\ca}{\mtc{A}}
\ncm{\irr}{\Irr}
\ncm{\co}{\mtc{O}}
\ncm{\ce}{\mtc{E}}
\ncm{\cg}{{\mtr{K}_0}}\ncm{\ci}{\mtc{I}}
\ncm{\blue}{\textcolor[rgb]{.00, .00, 1.00}}
\ncm{\red}{\textcolor[rgb]{1.00, .00, .00}}
\ncm{\green}{\textcolor[rgb]{.00, 1.00, .00}}
\ncm{\Gm}{\Gamma}\ncm{\ind}{\mtr{Ind}}\ncm{\res}{\mtr{Res}}
\ncm{\ed}{\end{document}}
\ncm{\beq}{\begin{equation}}
\ncm{\beqn}{\begin{equation*}}
\ncm{\eeq}{\end{equation}}
\ncm{\eeqn}{\end{equation*}}
\ncm{\bea}{\begin{eqnarray}}
\ncm{\eea}{\end{eqnarray}}
\ncm{\beanon}{\begin{eqnarray*}}
\ncm{\eeanon}{\end{eqnarray*}}\ncm{\ek}{\eps|_K}\ncm{\diez}{\#}\ncm{\bl}{\bn{lemma}}
\ncm{\el}{\end{lemma}}\ncm{\Inv}{\mtr{Inv}}\ncm{\rad}{\mtr{rad}}\ncm{\br}{\bn{rem}}
\ncm{\er}{\end{rem}}\ncm{\rep}{\Rp}
\ncm{\bc}{\bn{corollary}}
\ncm{\ec}{\end{corollary}}
\numberwithin{equation}{section}
\ncm{\bt}{\begin{theorem}}
\ncm{\et}{\end{theorem}}
\ncm{\bqst}{\begin{question}}
\ncm{\eqst}{\end{question}}
\ncm{\bconj}{\begin{conjecture}}
\ncm{\econj}{\end{conjecture}}\ncm{\ct}{\cite}

\newcommand{\Zed}{\mathbb{Z}}

\newcommand{\Comp}{\mathbb{C}}
\newcommand{\NN}{\mathbb{N}}
\newcommand{\IA}{\mathbb{A}}

\newcommand{\Spec}{\mathrm{Spec}}
\newcommand{\End}{\mathrm{End}}
\newcommand{\ldual}[1]{\leftidx{^*}{\!#1}{}}
\newcommand{\rdual}[1]{{#1}^*}

\newcommand{\KER}{\mathfrak{Ker}}
\newcommand{\vect}{\mtr{Vec}}

\title[Fusion categories]
{On normal tensor functors and coset decompositions for fusion categories}
\author{A. Brugui\`eres and Sebastian  Burciu}
\address{D\'epartement de Math\'ematiques\\Universit\'e Montpellier II \\
34095 Montpellier Cedex 5, France}\email{bruguier@math.univ-montp2.fr, alain.bruguieres@gmail.com}\address{\textnormal{and}} \address{Simion Stoilow Institute of Mathematics of the Romanian Academy, Research Unit 5, P.O. Box 1-764, RO-014700, Bucharest, Romania}
\email{sebastian.burciu@imar.ro, smburciu@gmail.com}
\date{\today}
\begin{document}
\maketitle
\begin{abstract} We introduce the notion of double cosets relative to two fusion subcategories of a fusion category. Given a tensor functor $F : \C \to \D$ between fusion categories, we introduce an equivalence relation $\approx^F$ on the set $\Lambda_\C$ of isomorphism classes of simple objects of $\C$, and when $F$ is dominant, an equivalence relation $\approx_F$ on $\Lambda_\D$. We show that the equivalent classes of $\approx^F$ are cosets.
We also give a description of the image of $F$ when it is a normal tensor functor, and we show that $F$ is normal if and only if the images of $\approx^F$ equivalent elements of $\Lambda_\C$ are colinear. We study the situation where the composition of two tensor functors $F=F'F''$ is normal, and we give a criterion of normality for $F''$, with an application to equivariantizations.
Lastly, we introduce the  radical of a fusion subcategory and compare it to its commutator in the case of a normal subcategory.
We also give a description for the image of a normal tensor functor between any two fusion categories.
\end{abstract}
\section*{Introduction}
In this paper we introduce  the notion of double cosets in a fusion category relative to two fusion subcategories,  which generalize double cosets in a group relative to two subgroups,  and we use this notion to study tensor functors between fusion categories.
Double cosets can be studied in terms of the corresponding regular elements in the Grothendieck ring of the fusion category.

\md
 We show that a tensor functor $F:\C\ra \D$ between fusion categories gives rise to two equivalence relations: one, denoted by $\approx^F$, on the set of simple objects $\Lambda_\C$ of isomorphism classes of simple objets of $\C$, and another (if $F$ is dominant), denoted by $\approx_F$, on the set  $\Lambda_\D$  of simple objects of $\D$.  These equivalence relations are the categorical analogues of  those  introduced by Rieffel in \cite{Ri} for the restriction functor attached to an extension of semisimple rings. We prove that the equivalence classes for $\approx^F$ are left (and right) cosets relative to a certain fusion subcategory of $\C$.
\md
In the special case of a normal tensor functor (a notion introduced in \cite{brn}); we give
a description of images of objects under normal tensor functors $F:\C\ra \D$. In particular,  we show that normal functors are characterized by the fact that objects in the same equivalence class in $\Lambda_\cc$ have colinear images in the Grothendieck ring of $\D$. A similar result holds for the equivalence classes in $\Lambda_\D$ when $F$ is dominant.
\md
We study the situation when a composite of tensor functors  $F = F'F''$ is normal; we show that if $F''$ is dominant and and $F$ normal then $F'$ is also normal. We also give a criterion of normality for $F''$ in Theorem \ref{thm-normal}. As an illustration, we apply this result to equivariantizations. Denote by $\C^G$ the equivariantization of a fusion category $\C$ under the action of a finite group $G$ by tensor autoequivalences; it is again a fusion category under some reasonable hypotheses ({\it e. g.} over~$\Comp)$. We obtain that if $H$ is a subgroup of $G$, then the restriction functor $\C^G \to \C^H$ is normal if and only if $H$ is a normal subgroup of $G$, and in that case, $\C^G$ is equivalent to $(\C^H)^{G/H}$ for a certain action of $G/H$ on $\C^H$ by tensor automorphisms.
\md
Lastly, by analogy with ring theory, we define the radical of a fusion subcategory of a fusion category $\C$. We show that the radical of a normal subcategory coincides with its commutator (defined in \cite{GN}). Recall that a normal subcategory $\cd$ of a fusion category of $\C$  (as defined in \cite{brn}) is the kernel of a normal tensor functor $F:\cc \ra \ce$ from  $\C$ to another fusion category $\ce$. The kernel of $F$ is the full subcategory of $\cc$ consisting of all objects whose image by $F$ is a multiple of the unit object.
\md
\md
This paper is organized as follows.
In the first section we recall known facts facts about fusion categories and tensor functors which we need.
In Section \ref{coset} we define double cosets in a fusion category $\C$ relative to two fusion subcategories $\D$ and $\E$, and give the corresponding decomposition of $\C$ into indecomposable bimodule categories, and also the corresponding decomposition of the regular virtual object of $\C$ in the Grothendieck ring.
In Section \ref{eqtf}, we introduce and study two equivalence relations $\approx^F$ and $\approx_F$ attached to a tensor functor $F$. In fact  $\approx^F$ is a coset equivalence relation (Proposition \ref{lm}).
In Section \ref{ntf} we give a description of the image of simple objects under a normal tensor functor, and a new characterization of normal functors.
In Section \ref{normalcomp}, we study the situation where the composition of two tensor functors $F=F'F''$ is normal, and in particular, a criterion of normality for $F''$, with an application to equivariantizations.
In Section \ref{nfc} we introduce the  radical of a fusion subcategory, and compare it to its commutator in the case of a normal subcategory.

\section{Preliminaries}\lb{prelim}

In this section we recall the basic facts on fusion categories and tensor functors that are needed in this paper.

\subsection{Fusion categories}

Let $\kk$ be a field. A \emph{fusion category over $\kk$} is a $\kk$-linear semisimple monoidal rigid  category $\cc$ with finitely many isomorphism classes of simple objects, finite
dimensional homomorphism spaces, such that each simple object $S$ is scalar (that is, $\End(S) = \kk$) and the unit object $\un$ of $\cc$ is simple.

We refer the reader to \cite{ENO} for more details on fusion categories. A \emph{fusion subcategory} of a fusion category $\cc$ is a full replete monoidal subcategory of $\cc$
which is also a fusion category.

In a fusion category $\cc$, the left and right duals $\ldual{X}$ and $\rdual{X}$ of an object $X$ are isomorphic (but it is still not known whether there always exists a sovereign structure, that is a natural monoidal isomorphism between the two duals).

Denote by $\Lambda_{\C}$ the set of isomorphism classes of objects of  $\mtc{C}$, and by  $\Inv(\cc) \subset \Lambda_\C$ the set of isomorphism classes of invertible objects of $\C$ (which is a group for the tensor product).
 If $X$ is an object of a fusion category $\C$, we denote by $\bra{X}$ the smallest fusion subcategory of $\C$ containing $X$.

An object of $\C$ is \emph{trivial}  if it belongs to $\bra{\un}$, that is, if it is isomorphic to $\un^n$ for some $n \in \NN$.

The \emph{Grothendieck ring} $K_0(\mtc{C})$ of $\C$ is the free $\mathbb{Z}$-module
$$K_0(\mtc{C}) = \bigoplus_{ X \in \Lambda_\C } \Zed [X],$$
equipped with the product defined by the tensor product of $\C$  (if $X$ is an (isomorphism class of) simple object of $\C$, we denote by $[X]$ the corresponding basis element of $K_0(\mtc{C})$).
A \emph{virtual object of $\C$} is an element of the ring $K_0(\mtc{C})_\Comp = K_0(\mtc{C})\ot_{\mathbb{Z}}{\mathbb{C}}$, that is, a formal complex linear combination of elements of $\Lambda_\C$.

Duality induces an involution $?^*$ on $K_0(\cc)$ given by $[X]^*:=[X^*]$.

For a simple object $X$ let $\fp(X)$ denote the Frobenius-Perron dimension of $X$, that is, the Frobenius-Perron eigenvalue of the left multiplication by $[X]$ on the Grothendieck ring $K_0(\cc)$. It is a positive real algebraic number. The Frobenius-Perron dimension extends linearly to an algebra morphism $\fp: K_0(\mtc{C})_\Comp \to \Comp$.

The \emph{regular virtual object of $\C$} is the virtual object:
\begin{equation*}
    \Rg_{ _{\C}}=\sum_{X \in \SC}\fp(X)[X]
\end{equation*}
and the Frobenius-Perron dimension $\fp(\C)$ of $\cc$ is the Frobenius-Perron dimension of $\Rg_{ _{\C}}$, that is: $$\fp(\C)  :=\sum_{X \in \SC}\fp(X)^2.$$

The regular virtual object satisfies $$x \,\Rg_{ _{\C}} =\fp(x) \Rg_{ _{\C}}=\Rg_{ _{\C}} \, x$$ for any  $x \in \gr(\C)_\Comp$  (see \cite{ENO}). In particular $\rC^2=\fp(\C)\rC$.
\md
 Denote by $m_\cc$  the $\Zed$-bilinear form on the Grothendieck ring $K_0(\mtc{C})$ defined by $$m_\cc([X],[Y]) = \dim_\kk \Hom(X,Y) \quad\mbox{for $X$, $Y$ simple objects of $\C$.}$$
The bilinear form
$m_\cc$ has the following properties:
\begin{enumerate}
\item symmetry: $m_\cc(x,y) = m_\cc(y,x)$;
\item adjunction property: $m_\cc(x,y\,z) = m_\cc(y^*\,x,z) = m_\cc(x\, z^*,y)$
\end{enumerate}
for $x,y,z \in K_0(\mtc{C})$.

We also denote by $m_\cc$ the extension of this form to a $\Comp$-bilinear form on the ring $K_0(\mtc{C})_\Comp$ of virtual objects of $\C$.

Given any subset $A \subset \Lambda_\C$, we denote by $\Rg_A$ the virtual object
$$\Rg_A = \sum_{X \in A}\fp(X)[X] \in \gr(\C)_\Comp.$$
In particular $\Rg_{_\C} = \Rg_{\Lambda_\C}$.

Given a $\kk$-linear functor $G : \C \to \C'$ between fusion categories, we denote by $G_!$ the linear map $K_0(\C)_\Comp \to K_0(\C')_{\Comp}$ defined by
$$G_![X] = \sum_{Y \in \Lambda_{\C'}} m_{\C'}(Y,G(X))[Y].$$

\subsection{Tensor functors between fusion categories}\label{prelim-tensor}

\md
A \emph{tensor functor} $F : \cc \to \dd$ between two fusion categories $\cc$ and $\dd$  over a field $\kk$ is a strong monoidal
$\kk$-linear functor $F :\cc \ra \cd$.

Let $F : \cc \to \dd$ be a tensor functor between fusion categories. The \emph{kernel of $F$} is the fusion subcategory  $\KER_F \subset \C$ consisting of all objects $X$ of $\C$ such that $F(X)$ is trivial.
It is endowed with a canonical fiber functor $\omega : \KER_F \to \vect_\kk$, $X \mapsto \Hom_\D(\un,F(X))$. Hence, by Tannaka reconstruction, one obtains a finite dimensional Hopf algebra $H$ over $\kk$ such that $\KER_F \simeq \comod-H$ (see \cite{brn} for more details).

We say that $F$ is \emph{dominant} if for $Y \in \Lambda_\D$ there exists $X \in \Lambda_\C$ such that $Y$ is a factor of $F(X)$.
The \emph{dominant image of $F$} is the fusion subcategory $\D' \subset \D$ generated by the image of $F$; and $F$ is dominant if and only if $\D'=\D$. Note that $F$ can always be factorized as a dominant functor $\C \to \D'$, followed by the inclusion $\D' \hookrightarrow \D$.

We say that $F$ is \emph{normal} (a notion introduced in \cite{brn}) if for any $X \in \Lambda_\C$ such that $F(X)$ contains the unit object $\un$, $F(X)$ is trivial (that is, isomorphic to $\un^n$ for $n \in \NN$).  In that case, the Hopf algebra $H$ is called the \emph{induced Hopf algebra of $F$}.

Note that a tensor functor between fusion categories admits a left adjoint and a right adjoint. Let us denote by $R$ the right adjoint of $F$.
Then $F$ is dominant if and only if $R$ is faithful, and $F$ is normal if and only if $FR(\un)$ is trivial.
The object $A=R(\un)$ is an algebra in $\cc$, called the \emph{induced algebra of $F$.}
 Moreover, we have canonical natural isomorphisms for  $X$ in~$\C$, $Y$ in~$\D$:
$$X \otimes R(Y) \iso R(FX \otimes Y)   \quad \mbox{and} \quad R(Y) \otimes X \iso R(Y \otimes FX),$$
and in particular (taking $Y=\un)$, we have isomorphisms $A \otimes X \iso RF(X) \iso X \otimes A$.
The natural transformation $\sigma_X : A \otimes X \iso X \otimes A$ so defined is a half-braiding, and $\IA = (A,\sigma)$ is a commutative algebra in the categorical center $\zz(\cc)$ of $\cc$ called the \emph{induced central algebra of~$F$.} See \cite{blv}  for a detailed account in a more general setting.

A fusion subcategory $\cd \subset \cc$ is \emph{normal} (in the sense of \cite{brn}) if there exists a normal tensor functor $F:\cc\ra \ce$
between fusion categories such that $\cd=\KER_F$.

\subsection{Exact sequences of fusion categories}\label{prelim-exact}

The notion of an exact sequence of tensor categories, introduced in \cite{brn} for tensor (not necessarily semisimple)  categories, generalizes the classical notion of an exact sequence of groups and the notion of an exact sequence of Hopf algebras due to Schneider. Here, we restrict our attention to fusion categories.

An \emph{exact sequence of fusion categories} is a diagram of tensor functors between fusion categories
$$(E) \quad \C' \stackrel{i}{\longrightarrow} \C \stackrel{F}{\longrightarrow} \C''$$
such that:
\begin{enumerate}
\item $i$ is fully faithful;
\item $F$ is normal and dominant;
\item the essential image of $i$ is $\KER_F$.
\end{enumerate}

We will use the following result on exact sequences (multiplicativity of Frobenius-Perron dimensions):

\begin{proposition}[\cite{brn}, Proposition 3.10]\label{multip}
Consider a diagram of tensor functors between fusion categories
$$(E) \quad \C' \stackrel{i}{\longrightarrow} \C \stackrel{F}{\longrightarrow} \C''$$
such that $i$ is fully faithful, $F$ is dominant, and $i(\C') \subset \KER_F$.
Then $$\fp(\C) \ge \fp(\C')\,\fp(\C''),$$ and equality holds if and only if $(E)$ is an exact sequence.
\end{proposition}

Interesting examples of exact sequences are provided by equivariantizations.
Let $G$ be a finite group, and $\cc$ be a fusion category. An \emph{action of $G$ on $\C$ by tensor autoequivalences} is
a strong monoidal functor  $\rho : G \to \Aut_\otimes(\C)$.

Given such an action, let $\cc^G$ be the equivariantization of $\C$ under the action of $G$.
Recall that the objects of $\C^G$ are pairs $(X,\underline{r}= (r_g)_{g \in G})$, where $X$ is an object of $\C$, and the $r_g$'s are isomorphisms $r_g:\rho(g)(X) \to X$ satisfying certain compatibilities (see  \cite{N} for more details).  The equivariantization $\C^G$ is a tensor (in general, not fusion) category, and the forgetful functor  $U_G : \C^G \to \C$, $(X, \underline{r}) \mapsto X$  is a tensor functor.

The tensor category $\cc^G$ is a fusion category provided $\kk$ is algebraically closed and its characteristic does not divide to the order of $G$.I n that case, a detailed description of the simple objects of $\C^G$ is provided in \cite{ss}.

As shown in \cite{brn}, in that situation $F$ is normal dominant, $\KER_F$ can be identified with $\rep (G)$, and we have therefore an exact sequence of fusion categories:
$$\rep(G) \longrightarrow \C^G \longrightarrow \C.$$

We say that a tensor functor $F : \C \to \D$ between fusion categories is an \emph{equivariantization} if there exists a finite group $G$ acting on
$\D$ by tensor autoequivalences, and a tensor equivalence $K : \C \to \D^G$ such that $F = U_G K$.

In \cite{brn, brn2}, several criteria are given for a tensor functor to be an equivariantization, notably in terms of central exact sequences.

An exact sequence $(E) \quad \C' \longrightarrow \C \stackrel{F}{\longrightarrow} \C''$ of fusion categories is \emph{central} if, denoting by $\IA = (A,\sigma)$ the induced central algebra of $F$, the forgetful functor $\cz(\C) \to \C$ induces an equivalence of categories $\bra{\IA} \to \bra{A}$.

In the fusion case, we have the following
\bt\label{crit-equiv}
Let $(E) \quad\C' \longrightarrow \C \stackrel{F}{\longrightarrow} \C''$ be an exact sequence of fusion categories over a field $\kk$. The following assertions are equivalent:
\begin{enumerate}[(i)]
\item The functor $F$ is an equivariantization for an action of a finite group $G$ acting on $\C''$ by tensor autoequivalences;
\item $(E)$ is a central exact sequence and its induced Hopf algebra is split semisimple.
\end{enumerate}
When these assertions hold, $H$ is commutative and $G = \Spec(H)$.
\et
\begin{proof}
This results directly from  \cite{brn2}, Proposition 3.2, combined with \cite{brn2}, Theorem 3.6.
\end{proof}

\subsection{Module and bimodule categories}

The notion of a module category is a categorification of the notion of a module over
a ring.

If $\C$ is a monoidal category, a left module category over $\C$ is a plain category $\M$ endowed with a strong monoidal functor $\rho : \C \to \underline{\End}(\M)$. In other words, it is a category $\M$ with a \emph{action} bifunctor $\ot : \C \times \M \to \M$
  with an associativity constraint $a_{U, V, M}: U \otimes (V \otimes M)\cong (U \otimes V) \ot M$ and a unit constraint $\un \otimes M \cong M$ satisfying a pentagon and a triangle coherence axiom, see  \cite{O}.

We will restrict ourselves to the case where $\C$ is a fusion category over a field $\kk$, $\M$ is a $\kk$-linear semisimple abelian category,
and the action bifunctor is $\kk$\ti linear in each variable. Such a module category is \emph{indecomposable} if $\M$ is not a direct sum of
two nontrivial module subcategories.

Let $\cc$ and $\cd$ be fusion category. Denote by $\boxtimes$ the tensor product of abelian categories introduced in \cite{d2}.
A $(\cc, \cd)$-bimodule category is a left $\cc\boxtimes \cd^{\mtr{rev}}$-module category. Here $\cd^{\mtr{rev}}$ denotes $\cd$ with opposite monoidal structure.

\section{Coset decompositions for fusion categories}\lb{coset}

\subsection{Double coset decomposition for fusion categories}
Let $\cc$ be a fusion category and $\D$, $\E$ be two fusion subcategories of $\C$.
Define a relation $\sim$ on $\SC$ by
$$X \sim Y \iff \exists D \in \Lam_{\D}, E \in \Lam_{\E}\mid \mbox{$Y$ is a factor of $D \ot X \ot E$}.$$

Observe that we have $X \sim Y$ if and only if $m_{\C}(X\;,\Rg_{ _{\D}}\, Y \,\Rg_{ _{\E}}) > 0$, where $\Rg_{ _{\D}}$ and $\Rg_{ _{\E}}$ denote the regular virtual objects of $\D$ and $\E$.

\begin{lemma}
The relation $\sim$ is an equivalence relation on $\SC$.
\end{lemma}

\begin{proof}
Let $X \in \SC$. Since the unit object of $\C$ belongs to $\D$ and $\E$, we have $X \sim X$, which proves the reflexivity of $\sim$. Let $X, Y \in \SC$  such that $X \sim Y$.
There exist $D \in \Lam_{\D}, E \in \Lam_{\E}$ such that $Y$ is a factor of $D \otimes X \otimes E$, that is, $m_\C(Y, D\ot Y \ot E) > 0$. By symmetry and  adjunction, $m_\C(X, D^* \ot Y \ot E^*) = m_\C(D^*\ot  Y \ot E^*,X) = m_\C(Y,D\ot X\ot E) > 0$, so $X$ is a factor of $D^* \otimes Y \otimes E^*$, and, $\D$ and $\E$ being stable by duality, that implies $Y \sim X$. So, $\sim$ is symmetric. Lastly, let $X,Y, Z \in \SC$ and assume $X \sim Y$ and $Y \sim Z$. There exist
$D, D' \in \Lam_{\D}, E, E' \in \Lam_{\E}$ such that $Y$ is a factor of $D \otimes X \otimes E$ and $Z$ is a factor of $D' \otimes Y \otimes E'$.
Therefore, $Z$ is a factor of $D' \otimes D \otimes E \otimes E'$. Now we have  $D' \otimes D = D_1 \oplus \dots \oplus D_k$ and $E \otimes E' =
E_1 \oplus \dots \oplus E_l$, where the $D_i$'s and the $E_j$'s are (isomorphy classes of) simple objects of $\D$ and $\E$ respectively, and $Z$, being simple, is a factor of one of the $D_i \otimes X \otimes E_j$'s, which shows that $X \sim Z$. Therefore $\sim$ is transitive.
\end{proof}

From now on, the equivalence relation $\sim$ will be denoted  by $r^\C_{\D,\E}$.  Its equivalence classes are called \emph{$\D$-$\E$ double cosets}.
If $B$ is a $\D-\E$ double coset, the full subcategory $\B \subset \C$ whose objects are finite direct sums of simple objects belonging to $B$ is called a \emph{$\D-\E$ double coset subcategory.}

\bn{example}  If $G$ is a finite group, denote by $\cc=\vect_{G}$  the fusion category of finite-dimensional $G$-graded vector spaces. For any two subgroups $H, L \subset G$ let $\cd:=\vect_{H}$ and $\ce:=\vect_{L}$. Then  $\cd-\ce$ double cosets in $\cc$ are just double cosets $H\backslash G \slash L$. \end{example}

\begin{rem}
The equivalence relation $r^\C_{\D,\E}$, in the special case $\cd=\vect$, appears in \cite[page 26]{dgno2}  in the study of the centralizer of a fusion subcategory of a braided fusion category.
\end{rem}

\begin{proposition}\label{bim}
Let $\C$ be a fusion category and $\D$, $\E$ be two fusion subcategories
$\C$. Let $\B_1, \dots ,\B_l$ be the $\D$-$\E$ double coset subcategories of~$\C$.
Then we have a decomposition of $\C$ as sum of indecomposable $\D$-$\E$ bimodule categories
\begin{equation*}
\C=\bigoplus_{i=1}^l\B_i
\end{equation*}
\end{proposition}

\begin{proof}
We have $\C = \bigoplus \B_i$ as $\kk$-linear categories because double cosets form a partition of $\Lambda_\C$.
The category $\C$ is a $\D$-$\E$ module category, $\D$ acting by tensoring on the left, and $\E$ on the right. The definition of $r^\C_{\D,\E}$ ensures that the $\B_i$'s are indecomposable sub-bimodule categories of $\C$.
\end{proof}

\br
It follows from \cite[ Remark 8.17]{ENO} that both $\frac{\fp(\cb_{i})}{\fp(\cd)}$ and $\frac{\fp(\cb_{i})}{\fp(\ce)}$ are algebraic integers.
\er
%
%
%
\subsection{The regular virtual object of a double coset}
Let $\C$ be a fusion category, and $\D, \E \subset \C$ be two fusion subcategories.
If $B$ is a  $\D$-$E$ double coset, set
$$\gr(B)_\Comp= \oplus_{[X] \in B} \Comp [X] \subset K_0(\mtc{C})_\Comp$$ and recall the notation $$\Rg_B = \sum_{[X] \in B} \fp(X) [X] \in \gr(B)_\Comp.$$

Let $B_1, \dots, B_l$ be the list of all double cosets, so that we have
$$\gr(\C)_\Comp = \bigoplus_{i=1}^l \gr(B_i)_\Comp \quad \mbox{and} \quad \rC = \sum_{i=1}^l \Rg_{B_i},$$
and denote by $T$ the operator on  $\gr(\C)_\Comp$  defined by $T\, x =  \rD x \rE$.
\md
Recall from \cite{F} that a matrix $A \in  M_n(\mathbb{C})$ is called  \emph{indecomposable}  if the set $I = \{1,\; 2,\; \cdots ,\; n\}$  cannot  be written as a disjoint union $I = J_1 \cup J_2$  with $J_1 \neq \emptyset$ and $J_2 \neq \emptyset$, in such a way  that $a_{uv} = 0$ whenever $u \in J_1$ and $v \in J_2$.
\md
%

\begin{proposition}\label{newpr}
With the above notations, for each double coset $B$ the space $\gr(B)_\Comp$ is stable under the operator $T$. Denoting by $T_B$ the restriction of $T$ to $\gr(B)_\Comp$,
the matrix of $T_B$ in the basis $B$ has real non-negative entries and is indecomposable.
Its Frobenius-Perron eigenvalue  is
$\fp(\D)\fp(\E)$ and the corresponding eigenspace is the line generated by $\Rg_B$.
\end{proposition}


\bn{proof} The stability of $\gr(B)_\Comp$ under $T$, and the fact that the matrix of $T$ in the basis $B$ is indecomposable, are immediate consequences
of the definition of the equivalence relation $r_{ _{\D,\;\E}}^{\C}$. The entries of the matrix are real non-negative because the coefficients of
$\rD$ and $\rE$ are real non-negative. Thus, the Frobenius-Perron theorem (see \cite{F}) applies to the matrix of $T_B$: its spectral radius  $\lambda$
is an eigenvalue (the Frobenius-Perron eigenvalue), and the corresponding eigenspace  has dimension $1$ and is generated by
a vector $x$ with positive coordinates. In particular $\fp(x)$ is positive.

We have $T_B \, x = \rD \, x \, \rE = \lambda \,x$, and, taking Frobenius-Perron dimensions we obtain $\fp(\D)\fp(x)\fp(E)= \lambda \fp(x)$,
hence $\lambda = \fp(\D)\fp(E)$.

On the other hand, we have $T \rC = \rD \rC \rD = \fp(\D)\fp(\E) \rC$, and since $\rC = \sum_{i=1}^{l} \Rg_{B_i}$, we see that $\Rg_{B_i}$'s are eigenvectors
of $T$ for the eigenvalue $\lambda$. In particular $v = \Rg_B$ up to a positive scalar.
\end{proof}

\begin{corollary}
With the above notations, if $X \in B$ then
\begin{equation}\label{formula}
\rD [X]\rE=\fp(X)\,\frac{\fp(\D)\fp(\E)}{\fp(\Rg_{B})}\Rg_{B}
\end{equation}
\end{corollary}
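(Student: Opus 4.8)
The claim computes the product $\rD[X]\rE$ explicitly for any simple object $X$ in a double coset $\mathcal{C}_i$ (equivalently, in $B_i$). My plan is to exploit the eigenvector structure already established: by Theorem~\ref{mainMack}(2), the operator $T = L_{\rD}\circ R_{\rE}$ sends each $A_i$ to $\fp(\D)\fp(\E)A_i$, and the $A_i$ form a basis of the principal eigenspace. The quantity $\rD[X]\rE$ is exactly $T([X])$, and by Proposition~\ref{newpr} (or the defining property of the relation $r^{\C}_{\D,\E}$) all constituents of $T([X])$ lie in the same class $\Lam_i$ as $X$. Hence $T([X])$ is supported on $\Lam_i$ and, being built from $T$, lies in the principal eigenspace; I expect it to be a scalar multiple of $A_i$.

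\textbf{Main computation.} The cleanest route is to pin down the scalar by applying the Frobenius--Perron dimension homomorphism $\fp\colon \gr(\C)\ot_{\Z}\mathbb{C}\to\mathbb{C}$, which is a ring homomorphism with $\fp(\rD)=\fp(\D)$ and $\fp(\rE)=\fp(\E)$. First I would argue that $\rD[X]\rE$ is a nonnegative combination of simple objects all lying in $\Lam_i$; by the eigenvector description this forces
\[
\rD[X]\rE = c\,A_i
\]
for some scalar $c$, since $A_i$ is (up to scalar) the unique vector in the principal eigenspace supported on $\Lam_i$. To find $c$, apply $\fp$ to both sides: the left side gives $\fp(\D)\fp(X)\fp(\E)$ because $\fp$ is multiplicative, and the right side gives $c\,\fp(A_i)$. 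Solving yields $c = \fp(\D)\fp(X)\fp(\E)/\fp(A_i)$, which is the asserted formula.

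\textbf{Justifying that the product is a multiple of $A_i$.} The step requiring care is showing $\rD[X]\rE$ is genuinely proportional to $A_i$, rather than merely an eigenvector supported on $\Lam_i$. This follows because the principal eigenspace of $T$ has the $A_j$ as a basis (Theorem~\ref{mainMack}(2)), so any eigenvector is $\sum_j c_j A_j$; the support condition kills all terms with $j\neq i$. One must check that $\rD[X]\rE$ is indeed an eigenvector: this is immediate since $\rD(\rD[X]\rE)\rE = \fp(\D)\fp(\E)\,\rD[X]\rE$, using $\rD^2=\fp(\D)\rD$ and $\rE^2=\fp(\E)\rE$. That $\rD^2=\fp(\D)\rD$ is recorded in the preliminaries for the regular object, so no new input is needed.

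\textbf{Expected obstacle.} The only subtle point is confirming that every simple constituent of $\rD[X]\rE$ lies in $\Lam_i$; this is exactly the definition of the relation $r^{\C}_{\D,\E}$ together with the observation (from the proof of Proposition~\ref{newpr}) that $m_{\cc}(X',T(X))>0$ iff $X\sim X'$. Since $X\in\Lam_i$ forces all such $X'$ into $\Lam_i$, the support claim holds and the proof closes. I anticipate the argument to be short, essentially a two-line eigenvector-plus-dimension-count once the support observation is invoked.
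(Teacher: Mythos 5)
Your proposal is correct and follows essentially the same route as the paper's proof: identify $\rD[X]\rE$ as an eigenvector of $T$ for the principal eigenvalue, invoke Theorem \ref{mainMack}(2) to write it in the basis $\{A_j\}$, use the definition of $r^{\C}_{\D,\E}$ to see its support lies entirely in $\Lam_i$, and extract the scalar by applying Frobenius--Perron dimensions. Your explicit verification of the eigenvector property via $\rD^2=\fp(\D)\rD$ and $\rE^2=\fp(\E)\rE$ is a detail the paper leaves implicit, but the argument is the same.
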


\begin{proof} Let $y = \rD [X]\rE \in \gr(B)_\Comp$. We have $T_B y = \rD^2 [X] \rE^2 = \fp(\D)\fp(\E) y$, so by Proposition~\ref{newpr}, $y = t \Rg_B$ for some scalar $t$. We have $\fp(y)=\fp(\D)\fp(E)\fp(X) = t \fp(\Rg_B)$,
hence the value of $t$. This proves the corollary.
\end{proof}

\begin{example}\label{Hopf} Let $H$ be a finite dimensional semisimple Hopf algebra over an algebraically closed field $\kk$ of characteristic $0$. Then $H$ is also cosemisimple.
Let $K$ and $L$ be two Hopf subalgebras of  $H$.  Then the category $\C=\comod H$ of finite dimensional right $H$-comodules is a fusion category, and $\D=\comod K$ and $\E=\comod L$ are fusion subcategories of $\C$.
In this case, the equivalence relation $r^{\C}_{ _{\D,\;\E}}$ coincides with the equivalence relation $r^H_{ _{K,\;L}}$ on the set of simple right comodules of $H^*$ introduced in \cite{cos}.
\end{example}
\bn{rem} The theory above allows one to extend the notion of double cosets for Hopf subalgebras from \cite{cos} to double cosets for co-quasi Hopf subalgebras of a given co-quasi Hopf algebra.
\end{rem}

We denote  by $r^{\C,\;r}_{\E}$ (resp. $r^{\C,\;l}_{\E}$) the equivalence relation $r^{\C}_{\vect,\;\E}$ (resp.  $r^{\C}_{\E,\vect}$). Its equivalence classes are called
the \emph{left (resp. right)  cosets of $\E$ in $\C$.}

\begin{example}
Let $\cc$ be a fusion category, and let
\beq\lb{grids}
\cc=\bigoplus_{g \in G}\cc_g
\eeq
be a $G$-graduation of $\cc$, for some finite group $G$. Assume that the graduation is faithful, that is, all $\C_g$'s are non-zero.
Then each $\cc_g$ is both a left and a right coset subcategory with respect to $\cc_1$.

Indeed, let $X, Y \in \Lambda_\C$. If $X$ and $Y$ belong to the same left coset,  there exists $Z \in \Lambda_{\C_1}$ such that $Y$ is a factor of $Z \otimes X$, so $X$ and $Y$ have  same degree.
Conversely, if $X$ and $Y$ have same degree, then $Y$ is a factor of $Y \otimes X^* \otimes X$, with
$Y \otimes X^*$ of degree $1$, so $X$ and $Y$ belongs to same the left coset.
This shows that the $\C_g$'s are left coset subcategories. Similarly, they are right coset
subcategories.

In particular, the fusion subcategories of $\cc$ containing $\cc_1$ are in bijection with the subgroups of $G$.
\end{example}

%

\section{Two equivalence relations associated with a tensor functor between fusion categories}\label{eqtf}
  Let $F : \cc \to \dd$ be a tensor functor between fusion categories, and denote by $R$ its right adjoint.

For $X \in \Lambda_\cc$ set
$$X^F = \{ [Y] \in \Lambda_\D \mid Y \,\mbox{is a factor of }\, F(X)\}$$ and
for $Y \in \Lambda_\D$, set $$Y_F = \{ [X] \in \Lambda_\C \mid Y \,\mbox{is a factor of }\, F(X)\}.$$

We have $X \in Y_F \iff Y \in X^F \iff X$ is a factor of $R(Y)$.

\bn{rem}
Note that $F$ is dominant if and only if $\Lambda_\cc^F=\Lambda_\dd$.  \end{rem}

\subsection{An equivalence relation on $\Lam_{\C}$ induced by $F$}

Define a relation $\sim^F$ on $\Lambda_\cc$ by $X  \sim^F  X' \iff X^F \cap X'^F \neq \emptyset$.
The relation $\sim^F$ is clearly reflexive and symmetric. However, it is not transitive in general:

\bn{example}\label{counterexample} Denote by $S_n$ the $n$-th symmetric group. The standard inclusion  $S_n \subset S_{n+1}$
defines by restriction a tensor functor from the category of $\Comp S_{n+1}$-modules to the category of $\Comp S_n$-modules.  It follows from Theorem 6.19 of \cite{BKK} that $\sim^F$ is not an equivalence relation.
\end{example}

Denote by $\approx^{F}$ the transitive closure of $\sim^{F}$, which is an equivalence relation on $\Lambda_\C$.

\bn{proposition}\label{lm}
Let $F:\C \ra \D$ a tensor functor between fusion categories, with right adjoint $R$. Let $\A = \bra{R(\un)}$ be the fusion subcategory of $\C$ generated by $R(\un)$.

Then $\approx^F =  r^{{C},\;r}_{\A} =r^{{C},\;l}_{\A}$. In other words, the equivalence classes of $\approx^F$ are the left cosets
with respect to $\A$, which coincide with the right cosets with respect to $\A$.
\end{proposition}

\bn{proof}
Let $A=R(\un)$. Recall from Section~\ref{prelim-tensor}  that we have for all $X$ in $\cc$: $RFX \simeq A \otimes X \simeq X \otimes A$. In particular,
the left and right cosets with respect to $\A = \bra{A}$ coincide.

On the other hand, for $X, Y \in \Lambda_\C$ we have
$$X \sim^F Y \iff \Hom_\D(FY,FX) \neq 0 \iff \Hom_C(Y,RF X) \neq 0$$ $$ \iff \Hom_\C(Y, A \otimes X) \neq 0.$$
In particular if $X \sim^F Y$, $Y$ is a factor of $A \otimes X$, so $X \,r^{\C,r}_{\A} \, Y$.  By transitivity, $X\approx^F Y \Rightarrow X \, r^{\C,r}_{\A}\, Y$.

Conversely, assume $X r^{\C,r}_{\A} Y$. That means that there exists $Z \in \Lambda_\A$ such that $Y$ is a factor of $Z \otimes X$.
Now the simple objects of $\A$ are the simple factors of $A^{\otimes n}$ and their duals. If $Z$ is a factor of $A^{\otimes n}$ with $n \geq 0$, one verifies by induction on $n$ that $X \approx^F Y$. The case where $Z$ is a dual of a factor of  $A^{\otimes n}$ reduces to the previous case, since $X$ is a factor of $Z^* \otimes Y$. Thus $ X \, r^{\C,r}_{\A}\, Y \Rightarrow X\approx^F Y$.
\end{proof}

%
%
%
%

\subsection{An equivalence relation on $\Lam_{\D}$ induced by $F$}\label{down}

Similarly, define a relation $\sim_F$ on $\Lambda_\D$ by $Y \sim_F Y' \iff Y_F \cap Y'_F \neq \emptyset$. In other words, $Y \sim_F Y'$ if there exists $X \in \Lambda_\C$ such that $F(X)$ contains both $Y$ and $Y'$ as factors.
One observes immediately that $\sim_F$ is symmetric, and it is reflexive if and only if $F$ is dominant.
 In general $\sim_F$ is not transitive.

If $F$ is dominant, we denote by $\approx_F$ the transitive closure of $\sim_F$, which is an equivalence relation on $\Lambda_\D$.
%
\begin{example}\label{hopfcase}
Consider $K$ a Hopf subalgebra of a semisimple Hopf algebra $H$ and let $F$ be the restriction functor $F:\Rp(H) \ra \Rp(K)$. Then in the paper \cite{BKK}, the equivalence relation $\approx^F$ was denoted by $u^H_{ K}$ and the equivalence relation $\approx_F$ by $d^H_K$. As explained in \cite{BKK} these equivalence relations are similar to the equivalence relations introduced by Rieffel in \cite{Ri}. They arise from the restriction functor attached to an arbitrary extension of semisimple rings in \cite{Ri}.
\end{example}

\bn{proposition} \label{prop-class}
 Let $F: \C \ra \D$ be a dominant tensor functor between fusion categories. Denote by $A_1, \dots, A_l$ (resp.  $B_1, \dots, B_{l'}$) the equivalent classes of the relation $\approx^F$ (resp. $\approx_F$) on
 on $\Lambda_\C$ (resp. $\Lambda_\D)$.

 Then $l = l'$, and after reindexing the $B_j$'s we have
 $$F_! (\Rg_{A_i}) =[\C:\; \D] \Rg_{B_i} \quad \mbox{and} \quad R_!(\Rg_{B_i}) = \Rg_{A_i}.$$
for $1 \leq i \leq l$, where $[\C:\; \D]=\frac{\fp(C)}{\fp(D)}$.
\end{proposition}

\bn{proof}

The proposition results from the following two lemmas.

\begin{lemma} Let $F : \C \to \D$ be a dominant tensor functor between fusion categories. Then:
$$F_!(\Rg_{\C})=[\C:\;\D] \Rg_{\D} \quad \mbox{and} \quad R_!(\Rg_{\D})=\Rg_{\C}.$$
\end{lemma}
\begin{proof} The first identity is proved in \cite{ENO}. Now
\begin{align*}
R_!(\Rg_{\D})&=\sum_{Y \in \Lambda_\D} \fp(Y) R[Y] =
\sum_{X \in \Lambda_\C} \sum_{Y \in \Lambda_\D} \fp(Y) m_\C(X,RY) [X] \\
&=\sum_{X \in \Lambda_\C} \sum_{Y \in \Lambda_\D} \fp(Y) m_\C(FX,Y) [X]\\
&=\sum_{X \in \Lambda_\C} \fp(FX) [X] = \sum_{X \in \Lambda_\C} \fp(X) [X] = \Rg_\C.
\end{align*}
\end{proof}

\begin{lemma}\label{bijquot} Let $F : \C \to \D$ be a dominant tensor functor between fusion categories. Let
$X, X' \in \Lambda_\C$ and $Y, Y' \in\Lambda_\D$  be such that $Y$ is a factor of $F(X)$ and $Y'$ is a factor of $F(X')$. Then
$$X \approx^F X' \iff Y \approx_F Y'.$$
As a result, $F$ induces a bijection $\Lambda_\C/_{\approx^F} \to \Lambda_\D/_{\approx_F}$, defined by $A \mapsto A^F$, its inverse being defined by $B \mapsto B_F$.
\end{lemma}

\begin{proof}
The first assertion results immediately from the definition of $\approx^F$ and $\approx_F$, and the second is a direct consequence of the first and the fact that $F$ is dominant.
\end{proof}

Now let us prove the Proposition. The second lemma shows that  $l=l'$, and we renumber the $B_j$'s so that $B_i = {A_i}^F$.
Then the support of $F(\Rg_{A_i})$ is  $B_i$, and the support of $R (\Rg_{B_i})$ is   $A_i$.
The Proposition now results from the first Lemma, by restricting the linear maps induced by $F$ and $R$ to the blocks corresponding with the equivalent classes of $\approx^F$ and $\approx_F$.
\end{proof}

\bn{rem} Observe that if $F$ is not dominant, one can still apply Proposition~\ref{prop-class} to the dominant tensor functor $\C\ra \D'$, $X \mapsto F(X)$, where $\D'\subset \D$ is the dominant image of $F$.
\end{rem}
\section{Normal tensor functors}\label{ntf}

In this section we study the previous equivalence relations when $F$ is normal (in the sense of \cite{brn}).
Recall that a tensor functor $F : \C \to \D$ between fusion categories  is normal if for any simple object $X$ of $\C$, $m_{ \D}(\un,\;F(X))>0
 \Rightarrow F(X)$  is trivial.

\begin{remark} In other words, $F$ is normal if and only if $({(\un_{\cd})}_{F})^F=\{\un_{\cd}\}$. \end{remark}

%
%

Recall that $\KER_F$ is the fusion subcategory of all objects $X$ of $\C$  such that $F(X)$ is trivial (that is, $F(X)=\un^{\fp(X)}$).

\bn{theorem}\label{eq} If $F : \C \to \D$ is normal, then $\sim^F$ is an equivalence relation on $\Lambda_\C$ and coincides with $r^{{\C},\;l}_{\mtr{ker}_F}$
and $r^{{\C},\;r}_{\mtr{ker}_F}$.

If in addition $F$ is dominant, $\sim_F$ is an equivalence relation on $\Lambda_\D$.
\end{theorem}

\bn{proof}
Let $X,X' \in \Lambda_\C$. If $X \sim^F X'$, then $F(X)$ and $F(X')$ have a common simple factor $Y$. Consequently, $F(X' \otimes X^*)$ contains $Y\otimes Y^*$ which in turn contains $\un$. Now $m_\D(F(X' \otimes X^*),\un) = m_\C(X' \otimes X^*, R \un) = m_\C(X', R\un \otimes X)$ so $X'$ is a factor of $R\un \otimes X$. Since $F$ is normal, $R\un$ is an object of $\KER_F$ so $X \, r^{{\C},\;l}_{\mtr{ker}_F} \, X'$.

On the other hand, if $X \, r^{{\C},\;l}_{\mtr{ker}_F} \, X'$, there exists $E$ in $\KER_F$ such that $X'$ is a factor of $E \otimes X$. Since
$F(E) \simeq \un^n$, $F(X')$ is a factor of $F(X)^n$. This proves that ${X'}^{F}\subset X^{F}$, so $X \sim^F X'$.

This proves that $\sim^F$ is equal to $r^{{\C},\;l}_{\mtr{ker}_F}$; it is also equal to $r^{{\C},\;r}_{\mtr{ker}_F}$ by reason of symmetry, and to $\approx^F$ because it is already an equivalence relation.

Now assume $F$ is dominant. Let $Y,Y' \in \Lambda_\D$ such that $Y \approx_F Y'$. Let $X, X' \in \Lambda_\C$ such that $Y \in X^F$ and $Y' \in X'^F$. Then we have $X \approx^F X'$ by Lemma~\ref{bijquot}, so $X \, r^{{\C},\;l}_{\mtr{ker}_F} \, X'$. We have just seen that this implies $X'^F \subset X^F$, so
$Y, Y' \in X^F$, hence $Y \sim_F Y'$. This proves that $\sim_F\, = \,\approx_F$ is an equivalence relation.
\end{proof}

\begin{corollary} Let $F:\cc \ra \dd$ be a tensor functor between fusion categories and let $\dd'$ be its dominant image. The following assertions are equivalent:
\begin{enumerate}[(i)]
\item $F$ is normal;
\item for $X,X' \in \Lambda_\cc$, $X^F$ and $X'^F$ are either disjoint
or equal;
\item for $Y,Y' \in \Lambda_{\dd'}$, $Y_F$ and $Y'_F$ are either disjoint
or equal;
\end{enumerate}
\end{corollary}

\begin{proof} We may assume $F$ dominant and therefore $\D = \D'$.

(ii)  $\implies$ (i) : let $X \in \Lambda_\C$ such that $\un_{\D} \in X^F$. Since $\un_{\C}^{\;F}=\{\un_{\D}\}$, (ii) implies that
$X^F=\{\un_{\D}\}$, that is $F(X)$ is trivial.
Thus, $F$ is normal.

(iii) $\implies$ (i) : let $X \in \Lambda_\C$ such that $\un_\D \in X^F$, and let $Y \in X^F$.  We have $X \in (\un_\D)_F \cap Y_F$, so (iii) implies
$Y_F = (\un_\D)_F$. In particular $\un_\C \in Y_F$, which means $Y= \un_\D$. This shows that $F(X)$ is trivial. Thus, $F$ is normal.

(i) $\implies$ (ii) and (iii) because if $F$ is normal, by Theorem~\ref{eq} the $X^F$'s and the $Y_F$'s are equivalence classes for $\sim^F$ (resp. $\sim_F)$.
\end{proof}

The above Corollary can be regarded as an analogue of the fact that a Hopf subalgebra is depth two if and only if it is normal. See \cite{BKK} for a proof in the context of Hopf algebras. 

%

\bt\label{image}
Let $F:\C \ra \D$ be a normal tensor functor between fusion categories, and denote by  $R$ its right adjoint. Let
$A_1, \dots, A_l$ denote the equivalence classes of $\approx^F$ in $\Lambda_\C$,

\begin{enumerate}

\item For $X \in A_i$, $$\frac{F_![X]}{\fp(X)} = \frac{ F_!(\Rg_{A_i})}{\fp(\Rg_{A_i})},$$

\item for  $X,\;X' \in \SC$:
\begin{equation*}
    X \approx^F X' \iff \frac{F_![X]}{\fp(X)}= \frac{F_![X']}{\fp(X')}.
\end{equation*}

\end{enumerate}
Now assume in addition that $F$ is dominant, and let $B_1, \dots, B_l$  be the
equivalence classes of $\approx_F$ in $\Lambda_\D$, with $B_i = (A_i)_F$. Then:
\begin{enumerate} \setcounter{enumi}{2}
\item
for $Y \in B_i$,
$$\frac{R_![Y]}{\fp(Y)} = [\C:\D] \frac{\Rg_{A_i}}{\fp(\Rg_{A_i})},$$
\item for $Y,\;Y' \in \SD$:
\begin{equation*}
     Y \approx_F \;Y' \iff \frac{R_![Y]}{\fp(Y)}= \frac{R_![Y']}{\fp(Y')}.
\end{equation*}
\end{enumerate}

\et

\bn{proof}
1) Let $X \in A_i$. By Theorem~\ref{eq}, the $A_i$'s are the left cosets relative to the fusion subcategory $\KER_F$. In particular, the class of $\un$ is
$\KER_F$.
Relation \eqref{formula} implies that
$$ \Rg_{\KER_F} [X] = \fp(X){\fp({\KER_F})\over\fp(\Rg_{A_i})} \Rg_{\A_i}.$$
Applying $F_!$, and noting that $F_!(\Rg_{\KER_F}) = \fp(\KER_F) [\un]$, we obtain the formula of Assertion (1).

2) An immediate consequence of  the first assertion is that  $$X \approx^F X'\Rightarrow\fp(X)^{-1} F_![X] = \fp(X')^{-1} F_! [X'].$$
On the other hand,
if $X, X' \in \Lambda_\C$ are such that $F_![X]$ and $F_![X']$ are colinear in $\gr(\D)_\Comp$, then $F(X)$ and $F(X')$ have a common simple factor so $X \sim^F X'$. This proves Assertion (2).

3)  Let $Y \in B_i$. We have $Y^F = A_i$, and
\begin{eqnarray*}
R_![Y] &=& \sum_{X \in \SC}m_{\C}(X,R(Y)) [X] = \sum_{X \in \SC}m_{\D}(F(X),Y)[X]\\&=&
\sum_{X \in A_i}m_{\D}(F(X),Y)[X]  \\ &=&
  \sum_{X \in A_i}m_{\D}(\frac{F(X)}{\fp(X)},Y)\fp(X)X  \\ &=& m_{\D}(Y,\; \frac{F(\Rg_{A_i})}{\fp(A_i)})\sum_{X \in A_i}\fp(X)X
  \\&=&  m_{\D}\bigl(Y,\; \frac{[\C:\D]\,\Rg_{B_i}}{\fp(A_i)}\bigr)\sum_{X \in A_i}\fp(X)X
  \\&=&[\C:\;\D]\, \fp(Y)\,\frac{\Rg_{A_i}}{\fp(A_i)},
\end{eqnarray*}
hence Assertion (3).

4) Assertion (3) shows that  $$Y \approx_F Y'\Rightarrow\fp(Y)^{-1} R_![Y] = \fp(Y')^{-1}R_![Y'].$$
On the other hand if  $R_![Y]$ and $R_![Y']$ are colinear virtual objects, then $R(Y)$ and $R(Y')$  have a common simple factor so $Y \sim_F Y'$. This proves Assertion (4).
\end{proof}
Recall \cite{brn} that a fusion subcategory $\cd \subset \cc$ is called normal if there is a normal tensor functor $F:\cc\ra \ce$ such that $\cd=\KER_{F}$.

The following Proposition, which is a straightforward consequence of Theorem~\ref{eq} and \ref{formula}, can be seen as a generalization of the fact that left and right cosets of a normal subgroup coincide. Its analogue for Hopf algebras was proven in \cite{cos}.

\begin{corollary}\label{leqr}
If $\D$ is a normal fusion subcategory of $\C$ then the left and right cosets of $\cc$ relative to $\cd$ coincide.  Moreover,
$\Rg_{\D}$ is then central in $\gr(\C)_\Comp$.
\end{corollary}
\section{Normality and composition of functors, with an application to equivariantizations}\label{normalcomp}

In this section, we study the following question: if the composition of two tensor functors is normal, what can be said about each functor?
We apply our result to the special case of  equivariantizations.

\begin{theorem}\label{thm-normal}
Consider a commutative triangle of tensor functors between fusion categories:
$$\xymatrix{
\E \ar[rr]^{F''}\ar[rd]_F && \D \ar[ld]^{F'}\\
& \C &
}$$
and assume that $F$ is normal and $F''$ is dominant.
Then
 \begin{enumerate}
\item
The functor $F'$ is normal;

\item Denoting by $\IA$ and $\IA''$ the central induced algebra of $F$ and $F''$, which are commutative algebras in the categorical center $\zz(\C)$ of $\C$, $\IA''$ is a subalgebra of $\IA$.

\item The functor $F''$ induces a dominant tensor functor $$F''_0: \KER_F \to \KER_{F'},$$ with $\KER_{F''_0} =\KER_{F''}$, and the following assertions are equivalent:
\begin{enumerate}[(i)]
\item $F''$ is normal;
\item $F''_0$ is normal;
\item $\fp(\KER_{F''}) = [\KER_F: \KER_{F'}]$.
\end{enumerate}
\end{enumerate}

\end{theorem}
\begin{remark}
The first assertion of Theorem~\ref{thm-normal} generalizes the classical fact that if a subgroup is normal, then it is normal in any intermediate subgroup. \end{remark}

\begin{remark} Denote by $H$ and $K$ the induced Hopf algebras of the normal functors $F$ and $F'$ respectively, so that we may identify
$\KER_F$ to $\comod(H)$ and $\KER_{F'}$ to $\comod(K)$; then the dominant tensor functor $F''_0$ is induced by a surjective Hopf algebra morphism $p : H \to K$, and condition (ii)  of Assertion (3)  of the Theorem means that $K^*$ is a normal Hopf subalgebra of $H^*$ (see~\cite{brn}).
\end{remark}

\begin{proof}  Assertion (1).  Let $Y \in \Lambda_\D$ and assume that $F'(Y)$ contains $\un$. Since $F''$ is dominant, there exists $X \in \Lambda_\E$ such that $F''(X)$ contains $Y$. Therefore, $F'F''(X) = F(X)$ contains $\un$, and since $F$ is normal, $F(X)$ is trivial. So $F'(Y)$, being contained in  $F'F''(X)$,
is trivial too. This shows that $F'$ is normal.

 Assertion (3). Clearly,  $F''(\KER_F) \subset \KER_{F'}$;  so $F''$ induces by restriction a tensor functor $F''_0 : \KER_F \to \KER_{F'}$. The kernel of $F''_0$ is $\KER_{F''}$.

\emph{Key fact:} if $X \in \Lambda_\E$ is such that $F''(X)$ contains $Y \in \Lambda_{\KER_{F'}}$, then $X$ belongs to $\KER_{F}$. Indeed, in that case
$F(X) = F'F''(X)$ contains $F'(Y) \simeq \un^{\fp(Y)}$, so, $F$ being normal, $F(X)$ is trivial.

From the key fact, we draw two consequences:

1) If $X \in \Lambda_\E$ is such that $F''(X)$ contains $\un$, then $X \in \KER_F$.
This shows that $F''$ is normal if and only if $F''_0$ is normal, thus (i) $\iff$ (ii).

2) The functor $F''_0$ is dominant. Indeed, for $Y \in \Lambda_{\KER_{F'}}$, there exists $X \in \Lambda_\C$ such that $F(X)$ contains $Y$, because $F''$ is dominant; and the key fact insures that $X \in \KER_F$.

Denote by $i$ the inclusion $\KER_{F''} \hookrightarrow \KER_F$, and consider the sequence of tensor functors:
$$(E)\quad \KER_{F''} \stackrel{i}{\longrightarrow} \KER_F \stackrel{F''_0}{\longrightarrow} \KER_{F'},$$
where $F''_0$ is dominant and $\KER_{F''}$ is its kernel.

We may apply Proposition~\ref{multip}: we have $$\fp(\KER_F)  \ge  \fp(\KER_{F'}) \fp(\KER_{F''}),$$ and equality holds if and only if $(E)$ is an exact sequence, that is, $F''_0$ is normal. This shows (ii) $\iff$ (iii).

Assertion (2). Recall that the induced central algebra $\IA$ of $F$ is defined as follows. See \cite{brn} for details, and \cite{blv} for a more general account (with the dual point of view). Denote by $R'$, $R''$ the right adjoints of $F'$ and $F''$ respectively, so that $R=R''R'$ is right adjoint to $F$. Let $\hat{T}=RF=R''R'F'F''$ be the monad of the Hopf monoidal adjunction $(F,R)$. Then $\hat{T}$ is a monoidal monad on $\E$.
Let $A = \hat{T}(\un)$.
For $X$ an object of $\E$, define morphisms $u_X : A \otimes X \to \hat{T}(X)$ and $v_X : X \otimes A \to \hat{T}(X)$ by the commutativity of the following diagrams:
$$
\xymatrix @!0 @R=1.2cm @C=3.5cm{
    A \otimes X \ar[r]_{A \otimes \eta_X}\ar@/^1pc/[rr]^{u_X}& A \otimes \hat{T}(X) \ar[r]_{\hat{T}_2(\un,X)} & \hat{T}(X)\\
    X \otimes A \ar[r]_{\eta_X \otimes A}\ar@/^1pc/[rr]^{v_X}& \hat{T}(X) \otimes A\ar[r]_{\hat{T}_2(X,\un)} & \hat{T}(X),
}$$
where $\eta$ is the unit of the monad $\hat{T}$, and
$\hat{T}_2$ is its monoidal structure.  Then $u_X$ and $v_X$ are in fact isomorphisms (because the adjunction $(F,R)$ is Hopf). Define an isomorphism $\sigma : A \otimes \id_\E \to \id_\E \otimes A$  by $$\sigma_X = v^{-1}_X u_X.$$
Then $\sigma$ is a half-braiding in $\E$, and $\IA = (A,\sigma)$.

Similarly, $\IA''=(A'',\sigma'')$ is defined in terms of the monoidal monad $\hat{T''} = R'' F''$ of the Hopf monoidal adjunction $(F'',R'')$.

Now the unit $\eta' :1_{\cd}\ra  R'F' $ of the monoidal adjunction $(F',R')$
defines a natural transformation $f = R''\eta'T'': \hat{T''} \to \hat{T}$ which is clearly a monoidal morphism of monads.
In particular, $f_\un$ is an algebra morphism $A'' \to A$. Moreover, since $\sigma$ and $\sigma''$ are defined in terms of the monoidal monad structures
of $\hat{T}$ and $\hat{T''}$, which are preserved by $f$, one verifies easily that
 $(X  \otimes f_\un) \sigma''_X = \sigma_X (f_\un \otimes X)$, that is, $f_\un$ is an algebra morphism from $\IA''$ to $\IA$ in $\cz(\cc)$.


Lastly, $f$ is a monomorphism because $F'$ being faithful, $\eta' : \id_\D \to R'F'$ is a monomorphism and $R''$ preserves monomorphisms. In particular,
$\IA''$ is a subalgebra of $\IA$ via $f_\un$.
%
\end{proof}

We can apply Theorem~\ref{thm-normal} to equivariantizations  (see Section~\ref{prelim-exact}).

\begin{corollary} Let $G$ be a finite group acting on a fusion category $\C$ by autoequivalences. Assume that the base field $\kk$ is algebraically closed, and its characteristic does not divide the order $|G|$ of $G$, so that the equivariantization $\C^G$ is a fusion category.
Let $H$ be a subgroup of $G$. Then the restriction functor $r^G_H:\C^G \to \C^H$ is normal if and only if $H$ is a normal subgroup of $G$.

Moreover, in that case the group $G/H$ acts on $\C^H$ by tensor autoequivalences, and $\C^G$ is tensor equivalent to $(\C^H)^{G/H}$ in such a way that the following diagram of tensor functors commutes:
$$\xymatrix{
\C^G \ar[rr]^{\simeq_\otimes}\ar[rd]_{r^G_H} && {(\C^H)}^{G/H} \ar[ld]^{U_{G/H}}\\
& \C^H &
}$$
\end{corollary}

\begin{proof}
We apply Theorem~\ref{thm-normal} to $\C = \C$, $\D= \C^H$, $\E = \C^G$, $F :\C^G \to \C$, $F': \C^H \to \C$ are the forgetful functors, and $F''=r^G_H:\C^G \to \C^H$ is the restriction functor defined by sending an object $(X,(r_g)_{g \in G})$ of $\C^G$ to the object $(X,(r_h)_{h \in H})$ of~$\C^H$.

In this case, $\KER_F= \rep(G)$, $\KER_{F'}= \rep(H)$, and the functor $$F''_0 : \KER_F \to \KER_{F''}$$ is just the restriction functor $\rep(G) \to \rep(H)$. Thus $\KER_{F''_0}=\KER_{F''} = \rep(G/\overline{H})$, where $\overline{H}$ is the normal subgroup of $G$ generated by $H$.
The equivalence of (i) and (iii)  in Assertion (3) tells us that $F''$ is normal if and only if $\fp(\rep(G/\overline{H}))=\frac{\fp(\rep(G))}{\fp(\rep(H))}$, that is,
$|G/\overline{H}| = |G/H|$, which is equivalent to $\overline{H} = H$, in other words $H$ is normal in $G$.

Now assume $H$ is normal in $G$. In addition to the exact sequence:
$$(E)\quad \KER_{F}=\rep(G) \longrightarrow \C^G \stackrel{F}{\longrightarrow} \C,$$
we have a second exact sequence of fusion categories:
$$(E'')\quad \KER_{F''}=\rep(G/H) \longrightarrow \C^G \stackrel{F''}{\longrightarrow} \C^H.$$

The induced Hopf algebra of $(E)$ is $\kk^G$, and that of $(E'')$ is $\kk^{G/H}$.
According to the equivariantization criterium of Theorem~\ref{crit-equiv}, $(E)$ is a central exact sequence.
Let $\IA = (A,\sigma)$ and $\IA''=(A'', \sigma'')$ denote the central induced algebras of $F$ and $F''$ respectively.
Centrality of $(E)$ means that the forgetful functor $\bra{\IA} \to  \bra{A}$ is an equivalence of categories.
By Assertion (2) of Theorem~\ref{thm-normal}, $\IA'$ is a subalgebra of $\IA$, so that $\bra{\IA''}$ is a fusion subcategory of $\bra{\IA}$. The forgetful functor $\bra{\IA''} \to \bra{A''}$ is full and dominant, so it is an equivalence, which means that $(E'')$ is central.
Now, again by Theorem~\ref{crit-equiv}, $(E'')$ is an equivariantization exact sequence, with group $G/H$, and we are done.
\end{proof}

\section{On the radical and commutator of a normal fusion subcategory}\lb{nfc}
In this section we introduce the radical of a fusion subcategory and compare it to the commutator in the case of a normal fusion subcategory.



Let $\C$ be a fusion category, let and $\mtc{D}$ be a fusion subcategory of $\mtc{C}$.

The \emph{radical of $\D$ in $\C$,} denoted by $\mtr{rad}_{\cc}(\cd)$, is the full abelian subcategory of $\C$ generated by the simple objects $X$ of $\C$  such that $X^{\otimes n}$ belongs to $\D$ for some integer $n>0$.

The \emph{commutator of $\D$ in $\C$}, denoted by $\cd^{co}$, is the full abelian subcategory of $\C$ generated by the simple objects $X$ of $\C$ such that $X\otimes X^*$ belongs to $\D$. This notion is introduced in \cite{GN}.

Note that if $K_0(\cc)$ is commutative (for example $\cc$ braided) then $\cd^{co}$ and  $\rad_{\cc}(\cd)$ are fusion subcategories of $\C$, but not in general.

For a normal fusion subcategory the radical and the commutator coincide. Indeed:

\bp\lb{comm}
Let $\C$ be a fusion category, and let $\D \subset \C$ be  a normal fusion subcategory of $\cc$. Denote by $F : \C \to \tilde{\C}$ a tensor functor between fusion categories such that $\D = \KER_F$.

Let $\E$ be the full abelian subcategory of $\C$ generated by the simple objects  $X$ of $\C$ such that $F(X) = M^n$, with $M$ invertible and $n \ge 0$. Then $\cd^{co} = \rad_{\cc}({\cd}) = \E$.
In particular, $\cd^{co}$ and $\rad_{\C}({\D})$ are fusion subcategories of $\C$.
\ep
\bpf
Observe that $\E$ is a fusion subcategory, because clearly a simple factor of the tensor product of two simple objects of $\E$, $\un$, and the dual of a simple object of $\E$, all belong to $\E$.

In order to show the inclusion $\D^{co} \subset \E$, we will need the following

\begin{lemma} Let $\C$ be a fusion category, and $X \in \Lambda_\C$. The following assertions are equivalent:

\begin{enumerate}[(i)]
\item $X$ is a multiple of an invertible object of $\C$;
\item There exists $n > 0$ such that $X^{\otimes n}$ is trivial.
\end{enumerate}
\end{lemma}
\begin{proof}
(i) $\implies$ (ii) because $\Inv(\C)$ is a finite group, so invertible objects of $\C$ have finite order.
(ii) $\implies$ (i). Assume $X^{\otimes n}$ is trivial for some $n > 0$, and let us show that $X$ is a multiple of an invertible object. We begin with the case where $X$ is simple. Since
$\un$ is a factor of $X \otimes X^{\otimes n-1}$, $X^*$ is a factor of  $X^{\otimes n-1}$, $X \otimes X^*$ is a factor of $X^{\otimes n}$ and in particular $X \otimes X^*$ is trivial.
Now $m_\C(\un, X \otimes X^*) = m_\C(X,X) = 1$,
so $X \otimes X^* = \un$, that is, $X$ is invertible. Now for the general case. Let $M$ be a simple factor of $X$. Since $X^{\otimes n}$ contains $M^{\otimes n}$, $M$ is invertible. Let $N$ be another simple (also, invertible) factor of $X$. Then $M^{\otimes n}$ contains $M^{\otimes n}$ and $N \otimes M^{\otimes n-1}$, which are both trivial and simple and therefore isomorphic to $\un$. This implies $M \simeq N$, so $X$ is a multiple of $M$.
\end{proof}

Now let $X$ be a simple object of $ \rad_{\C}({\D})$. There exists $n > 0$ such that $F(X)^{\otimes n}$ is trivial, and by the Lemma, $F(X)$ is a multiple of
an invertible object of $\tilde{\C}$, so $X$ belongs to $\E$. Thus, $\rad_{\C}({\D}) \subset \E$.

Now if $X$ be a simple object of  $\D^{co}$, then $F(X) \otimes F(X)^*$ is trivial. If $M$, $N$ are two simple factors of $F(X)$, $M \otimes N^*$ is trivial, which implies that $M = N$ is invertible, and $X$ lies in $\Lambda_\E$. Thus $\D^{co} \subset \E$.

Conversely, let $X$ be a simple object of $\E$. We have $F(X) = M^m$, with $M$ invertible. By the lemma, there exists $n > 0$ such that $F(X)^{\otimes m}$ is trivial, so $X^{\otimes m}$ belongs to $\D$, and $X$ belongs to $\rad_{\C}(\D)$.
Also, $F(X\otimes X^*)$ is trivial, so $X \otimes X^*$ belongs to $\D$, and $X$ belongs to $\D^{co}$.

Thus, $\rad_{\C}({\D})=\D^{co} = \E$.
\epf
%


\subsection*{Acknowledgments} The work of Sebastian Burciu  was supported by a grant of the Romanian National Authority for Scientific Research, CNCS - UEFISCDI, project number PN-II-RU-TE-2012-3-0168.
\bibliographystyle{amsplain}
\bibliography{ntf1}
\ed